\newcommand{\V}{\mathcal{V}}
\newcommand{\T}{\mathcal{T}}
\newcommand{\F}{\mathcal{F}}
\newcommand{\J}{\mathcal{J}}
\newcommand{\bigO}{\mathcal{O}}
\newtheorem{theorem}{Theorem}[section]
\newcommand{\abs}[1]{|#1|}
\newcommand{\etal}{{\em et~al.~}}
\begin{document}
\title{Electric Vehicle Valet}
%
%
%

\author{Ali Khodabakhsh, Orestis Papadigenopoulos, Jannik Matuschke,\\ Jimmy Horn, Evdokia Nikolova, Emmanouil Pountourakis
}

%

\maketitle

\begin{abstract}
We propose a novel way to use Electric Vehicles (EVs) as dynamic mobile energy storage with the goal to support grid balancing during peak load times.  EVs seeking parking in a busy/expensive inner city area, can get free parking with a valet company in exchange for being utilized for grid support. The valet company would have an agreement with the local utility company to receive varying rewards for discharging EVs at designated times and locations of need (say, where power lines are congested).
Given vehicle availabilities, the valet company would compute an optimal schedule of which vehicle to utilize where and when so as to maximize rewards collected.  Our contributions are a detailed description of this new concept along with supporting theory to bring it to fruition.  On the theory side, we provide new hardness results, as well as efficient algorithms with provable performance guarantees that we also test empirically. 
\end{abstract}


%
\IEEEpeerreviewmaketitle

\section{Introduction}
%
%
%
%

\IEEEPARstart{W}{e} propose a new concept on how to use Electric Vehicles (EVs) as dynamic mobile energy storage with the goal to support grid balancing during peak load times.  During a given day, EV owners turn in their car to a central planner that manages charging and discharging.  Prior to that (say at the beginning of the day) the EV owners provide an availability schedule for their car (e.g., they will not need their car from 9 am to 5 pm while they are at work or from noon to 3 pm while they are at lunch and yoga class).  Each car is assumed to arrive fully charged so it can be discharged as soon as it is turned in (we will relax this assumption later).  The central planner utilizes valets to take the cars to different discharging locations around the city, and subsequently charge the cars.  In particular, the planner wants to compute a schedule for when to discharge available cars, and which valet to assign to which car in a given hour.

A few months back, one of the authors was looking for parking in downtown Boston to visit the Boston Aquarium and faced with the steep rate of \$40 for three hours of parking in the adjacent parking lot, started thinking how parking for electric vehicles could be made free, or even be rewarded, if it were coupled with utilizing the EV batteries as dynamic electricity storage that could be shifted between locations of need.  

In exchange for free parking (and possibly a reward) the EV owners agree to the valet service discharging their vehicle once or twice during their parking time.  
The valet drives the EV to nearby locations that are designated by the local electric utility company as congested or overloaded in the sense that the demand for power is straining the ability of the local power line to supply it.  The EV is discharged at that location, providing additional needed power, and then driven to a connection port at a non-congested nearby power line to be recharged.  At the end, the electric vehicle is returned to the owner fully charged. 

How can such a valet service be made operational?  The utility company would provide connection ports with varying rewards that support grid balancing during peak load times
through the discharging of EVs in designated locations at designated times. 
The valet service would then compute an optimal schedule of what EVs to discharge where and at what time.

In this paper, we present in detail the concept of EV Valet and then describe the theory towards making it operational, which includes novel NP-hardness results and several approximation algorithms.  

\vspace{0.5em}
\noindent {\bf EV Valet: Description of Concept.}
Our EV Valet concept (henceforth, {\em valet} for short) is to use valets to connect EV owners to locations of power supply need.  The first step consists in the utility company deciding where it needs help with extra power supply and creating connection ports at these locations.  It then announces rewards for these ports.  The rewards would capture the level of congestion and extend the life of supported power lines and equipment (in essence, reducing the cost and delaying the timing of upgrades). 
The assumption is that the valet company would operate in an area with regular congestion that can support sufficient rewards. 
Determining the optimal rewards is a topic we plan to pursue in future work, and is out of scope of the current paper. 

For charging, the utility company would offer a small number of free or low-cost charging locations with many ports in nearby uncongested (e.g., suburban or large commercial) areas.  If the charging is not free, we can assume it is free by redefining the reward to be the difference in price between the discharging location and the charging location. Additionally, the charging locations would serve as overflow parking.

There are two ways to set up the valet company: 1) As a third-party valet company whose goal is to maximize its own profit; 2) As part of the utility company, which has its own valets, and essentially develops the valet company in-house at cost.  We focus on the third-party approach because it is more complex than the in-house approach and 
we feel that profit maximization promotes stronger incentive for optimal scheduling.  

A key advantage of our idea 
is that it does not lead to a zero-sum outcome.  We specifically set it up so that everyone benefits, with a principal goal to make the {\em whole system more efficient} and {\em increase social welfare.}  

We now have three parties that participate in this process: the utility company, the valet company and the customers (EV owners).  Each party has to act reasonably (i.e., to take only a portion of the overall benefit) to make it worthwhile for the other parties to participate.  For this process to be sustainable, we take as a given a local electricity network with overloads frequent enough 
for the valet company to be profitable.  The utility company would offer rewards that are a percentage (say, 25\% to 50\%) of what its cost would be if the valet were not present.  In that case, the utility cost would be calculated as the cost to buy the extra needed  power and deal with overloads both in the short and long term.  

Once the utility announces the rewards (assume this is done a day or more in advance), the valet's job is to compute an optimal schedule for collecting the rewards, given the available cars with their variable schedules. Developing the necessary theory for this is our main technical contribution, which  proposes algorithms with provable worst-case guarantees.  

To entice EV owners to participate in its service, the valet has to provide appropriate incentives.  These incentives are in the simplest case free parking in a high cost parking area like downtown Boston, which could be as much as \$10/hour or more. 
The valet company might also give a percentage of the collected rewards as additional incentive if needed. 
The valet company also has to account for its overhead cost in valet employees and utilized parking lots.   
Note, the valet operation would have a smaller inner city parking lot for vehicle drop-offs and pick-ups, and a larger parking lot further out near an uncongested utility feeder, where the free or low cost charging would occur. 
 
The customers' benefit would be free parking, a portion of the collected rewards, if offered, and reduction of the overall cost of their utility company.  The latter is tied to the larger social benefit of lowering everyone's utility bills, including their own. 
The parking savings should far outweigh the wear and tear on the EV batteries, which we estimate at about \$5 per charging cycle.\footnote{This estimate is based on a 5 year replacement of a \$4,000 battery system, with 150 charges per year.}  In contrast, the valet service would provide upwards of \$20 in value per charging cycle through the offset of the customer's parking costs.  
This is similar to Uber/Lyft drivers who  trade-off the wear and tear on their car for monetary compensation.
Furthermore, the valet company could allow customers to set a limit of one or two discharges per day.  

The goal of the algorithms we develop in this paper is to optimize how well the valets collect the rewards. The better they do it, the bigger the benefit for each party.  In the event of a missed reward, (i) the utility would suffer an opportunity cost in the missed congestion reduction, 
(ii) the customer would have to pay full price for parking, and (iii) the valet would lose profit. 

We note that a positive externality of the valet concept is that it also benefits the overall social welfare by improving parking availability for others outside of the service. 

\vspace{0.5em}
\noindent
{\bf Our Contributions.}
In summary, our contributions are:


\begin{enumerate}
\item We propose the novel EV Valet concept as dynamic mobile energy storage with the goal to supply grid load balancing during peak load times. 
\item We set up a mathematical model for optimizing the discharging schedules for the EV valet service.
\item We prove that this EV valet problem is strongly NP-hard. In other words, unless $\textbf{P=NP}$, not only is there no polynomial-time algorithm for solving the problem optimally, but also there is no algorithm that can produce an assignment of reward greater than $1-\epsilon$ of the optimal, in time polynomial in the input size and in $1/{\epsilon}$.
\item Despite the computational hardness of the general setting, we obtain the optimal solution for 4 special cases of the problem, when i)~we have access to super-fast chargers, ii)~we have a single car, iii)~the number of EVs is constant, and iv)~all EVs have the same availability and constant charging time.
\item For the general setting, we give two approximation algorithms, namely \textsc{greedy} and \textsc{randomized rounding}, and we prove that they provide (worst-case) performance guarantees of $1/3$ and $1-1/e$, respectively.\footnote{An $\alpha$-approximation algorithm for a maximization problem is an algorithm that runs in polynomial time and returns a solution whose value is at least $\alpha$ times the optimal value.}
\end{enumerate}
The rest of this paper is devoted to the theoretical contributions 2-5 above. 

\section{Related work}
\label{sec:related}
EV charging and discharging has previously been studied in the context of power loss minimization \cite{singh2010influence}, frequency regulation \cite{liu2013decentralized}, voltage regulation \cite{singh2010influence,yong2015bi}, peak shaving \cite{hussain2018mobility,zhang2017real}, and supporting renewable energy sources \cite{caramanis2009management}. We refer the reader to a recent survey by Amjad \etal \cite{amjad2018review} on various optimization approaches and objectives employed for EV charging. Our goal is most closely related to peak shaving, however the above-mentioned work on that is very different from ours in that 
it either does not consider discharging \cite{zhang2017real}, or does not incentivize the EV owners \cite{hussain2018mobility}.

A convex optimization method for optimal scheduling of EV charging and discharging is proposed by He \etal \cite{he2012optimal}, where the authors aim to coordinate the charging and discharging of EVs to minimize the total cost of all vehicles. The authors consider a unique arrival and departure time for each EV; however, they ignore the spatial variation of the price and assume that the electricity price at a time instant is the same regardless of the charging location. 
In comparison, we consider a more general availability model where each EV can leave the parking as many times as desired. 
Also, our model does not require any assumption on the price function. Most importantly, our model is very different conceptually in its objective to balance the grid compared to the objective of minimizing the charging cost of EVs.

Hutson \etal \cite{hutson2008intelligent} propose an intelligent scheduling of EVs in a parking lot, so as to maximize profits by discharging EVs at the times when the market power price is high, and charging when the price is low.  The spatial variation of the price is not considered in their model. 
Finally, they utilize a particle swarm optimization approach that does not provide any guarantee and, indeed, it can suffer from premature convergence.  In contrast, we provide guarantees. 
But more importantly, our goal is to alleviate congestion for the utility and not to collect the highest market power price. Sometimes these may coincide but are not necessarily related in general.  Additionally, we charge and discharge EVs in multiple locations in a geographic area as opposed to one parking lot.  

An online auction framework for EV charging is proposeded by Xiang \etal \cite{xiang2015auc2charge}, where in a large parking lot, every spot is equipped with a charging point, and the EV users submit bids on their charging demands. The parking lot then decides on the allocation and pricing based on the collected bids. It is shown that the proposed mechanism is truthful and individually rational, while approximately maximizing social welfare. Vehicle to grid (V2G) discharging is not considered in the proposed market mechanism, while effective discharging is our main objective.

A combination of an autonomous parking system with EV charging is studied by Timpner and Wolf \cite{timpner2014design}, with the goal of scheduling the charging times of autonomous vehicles on a limited number of charging stations in a parking lot. The difference with our valet model is that they consider homogeneous charging stations, unidirectional flow of energy (no V2G), and charging station's utilization as the objective, while in our paper heterogeneous stations in different locations, EV discharging (V2G) and reward collection are critical. Furthermore, the authors propose 5 different scheduling algorithms, but no theoretical guarantee is given with respect to their stated objective, whereas we provide theoretical guarantees. 

From an algorithmic perspective, most existing work is either based on mixed-integer programs \cite{jin2013optimizing}, which cannot be solved efficiently for large instances, or heuristics without any optimality guarantees. These heuristics include genetic algorithms \cite{vaya2012centralized}, particle swarm optimization \cite{hutson2008intelligent,saber2009optimization,soares2011optimal}, and ant colony optimization \cite{yang2015load}. In contrast, we exploit techniques from theoretical computer science, in particular we provide novel hardness results and approximation algorithms, as well as adapt an algorithm for an interval scheduling problem~\cite{bhatia2007algorithmic}, to provide efficient algorithms with rigorous performance guarantees.
\section{Preliminaries}
\label{sec:formulation}
In this section we propose our mathematical model for optimal discharge scheduling of EVs by the valet service, and then study the hardness of the proposed optimization problem.
\subsection{Problem definition}
We consider a set $\V = \{1,2,...,m\}$ of {\em vehicles} and a discrete time horizon $\T=[T]=\{1,2,\dots,T\}$. Every vehicle $i \in \V$ is associated with a set $\T_i \subseteq \T$ of {\em availabilities}, i.e. times where it can be discharged, and a {\em charging time} $C_i \in \mathbb{N}$, that is, the time the vehicle needs to recharge before being ready to be discharged again. We say that a vehicle $i \in \V$ is {\em available} at time $t$, when $t \in \T_i$. We also consider a set $\J=\{1, 2, \dots, n\}$ of {\em stations} that can be used for discharging a vehicle. At any time $t \in \T$, any available vehicle $i \in \V$ can be discharged at some station $j \in \J$, and obtain {\em reward}  $p_{j,t} \in \mathbb{R}$. 
Notice that the reward depends on both the station and time of discharging, which captures the utility's localized congestion costs. Moreover, a reward can be negative, modeling in this way the fact that the benefit to the utility company at a specific station/time can be smaller than the cost of charging. Our objective is to find a {\em feasible} assignment of vehicles to stations/times that maximizes the total collected reward. In any feasible assignment, the following conditions must hold: (a) Every station can be assigned to at most one vehicle $i \in \V$ at every time $t \in \T$ such that $t \in \T_i$. (b) Every vehicle $i \in \V$ can be associated with at most one station $j \in \J$ at any time $t \in \T_i$. (c) If a vehicle $i \in \V$ has been discharged at some station at time $t$, the same vehicle cannot be discharged again during the time interval $[t+1 , t + C_i]$, as it needs $C_i$ time units to get charged. To make sure that the valet service returns the car fully charged, we can simply prune the availabilities in the pre-processing phase, preventing the car to be discharged in its last $C_i$ available slots. In addition, we can relax the assumption on fully-charged arrivals in the same way, by pruning the first available slots of each car so as to fully charge it. Our problem can thus be fully described by the following integer programming (IP) formulation:
\begin{align}\label{IP}
\text{max }~ &\sum_{i \in \V} \sum_{j \in \J} \sum_{t \in \T} p_{j,t} x_{i,j,t}\\
\text{s.t. }~ &\sum_{i \in \V} x_{i,j,t} \leq 1,~~~~~~~~~~~~~~~~~~~~~~~ \forall j \in \J, t \in \T\label{eq:IP_c1}\\
 &\sum_{j \in \J} \sum_{t' \in [t, t+C_i] }  x_{i,j,t'} \leq 1,~~~~~~~~~~ \forall i \in \V, t \in \T_i\label{eq:IP_c2}\\
 &x_{i,j,t} \equiv 0,~~~~~~~~~~~~~~~~~~~ \forall i \in \V, j \in \J, t \notin \T_i\\
 &x_{i,j,t} \in \{0,1\},~~~~~~~~~~~~~~\forall i\in \V, j \in \J, t \in \T_i.
\end{align}
In the above IP, $x_{i,j,t}$ denotes the decision variable of discharging vehicle $i \in \V$ at station $j \in \J$ and at time $t \in \T$. Constraint \eqref{eq:IP_c1} ensures that every station can be used to discharge at most one vehicle at any time period, while constraint \eqref{eq:IP_c2} is the validity constraint for the recharging restriction, capturing that for every vehicle $i$, discharging cannot occur sooner than $C_i$ units of time following the previous discharge.

\subsection{Hardness result}
\begin{figure*}
\centering
\begin{minipage}{.8\textwidth}
  \centering
  \includegraphics[width=.9\linewidth]{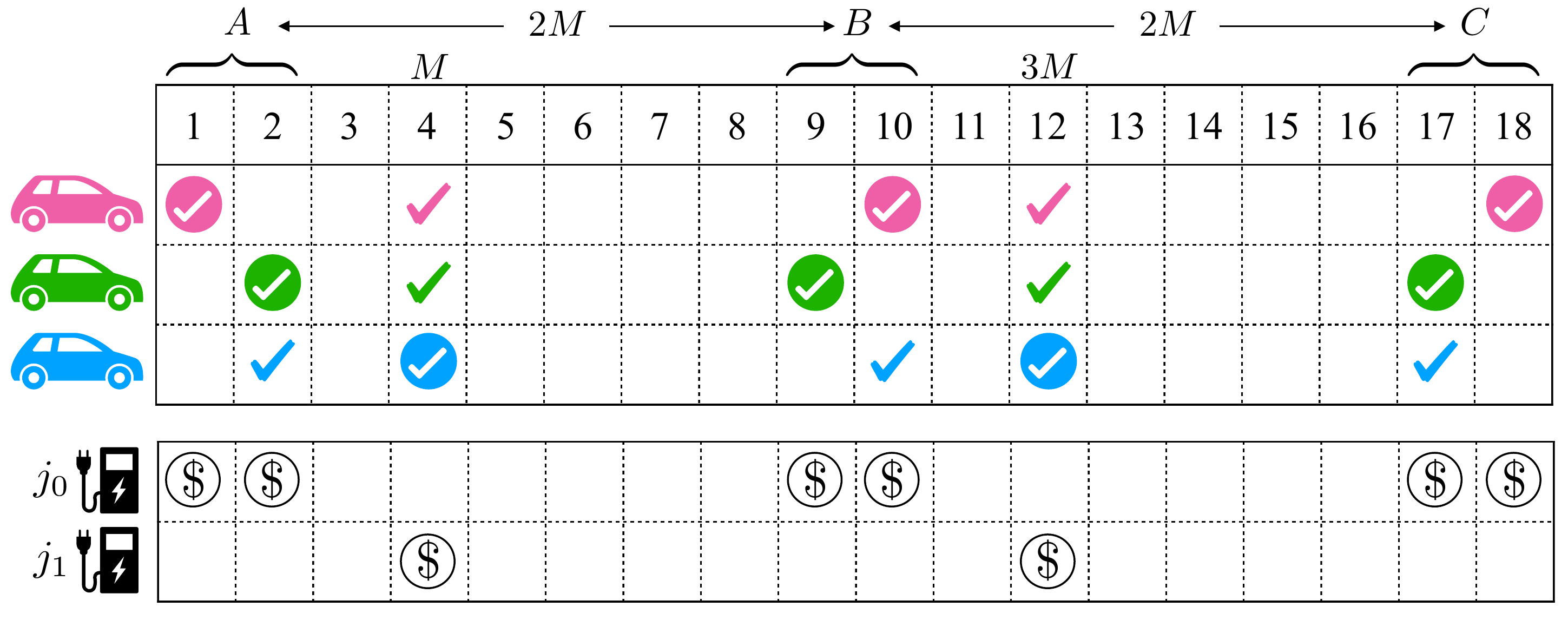}
\end{minipage}%
\begin{minipage}{.2\textwidth}
  \centering
  \includegraphics[width=.9\linewidth]{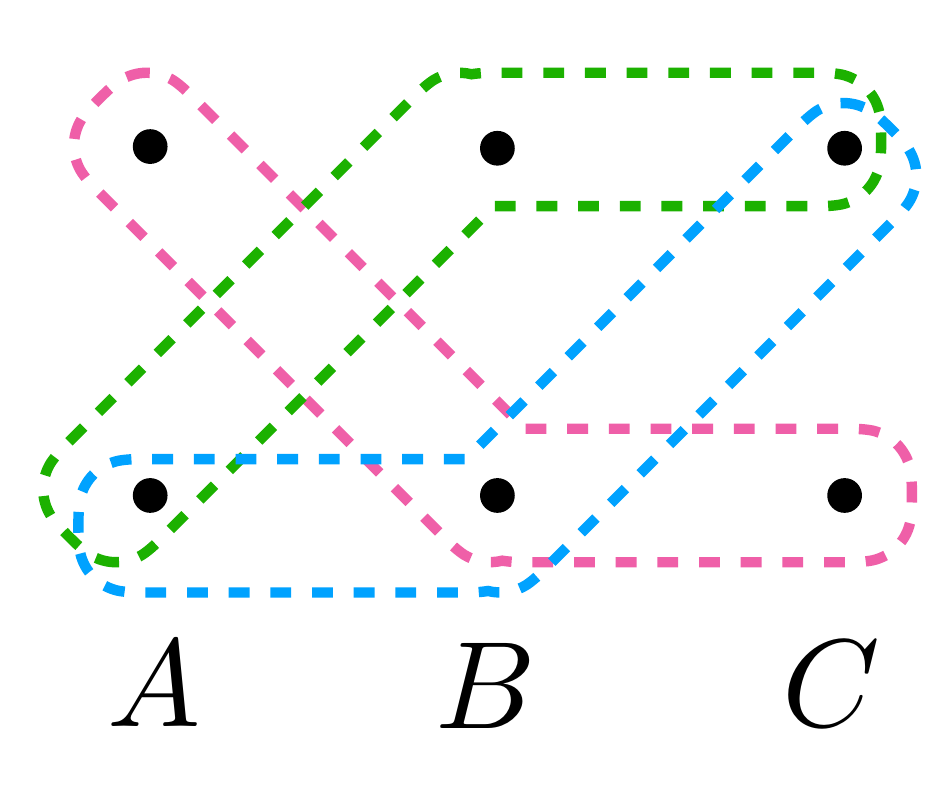}
\end{minipage}
\caption{Example with $k=2$, $M=4$, and $C=6$, created from the \textsc{3D-matching} instance on the right. Availabilities of each car (which encode the hyperedge of the same color) are checked in the upper table, and the circled ones represent the optimal solution which collects all the rewards in the lower table.}
\label{fig:reduction}
\end{figure*}
We now prove that for a non-constant number of vehicles and for arbitrary availability intervals, problem~\eqref{IP} is {\em strongly-NP hard}. In terms of algorithmic design, this statement implies that, unless $\textbf{P = NP}$, not only is there no polynomial-time algorithm for solving the problem optimally, but also there is no algorithm that can produce an assignment of reward greater than $1-\epsilon$ of the optimal, in time polynomial in the input size and in $1/{\epsilon}$.
\begin{theorem}
Problem~(\ref{IP}) is strongly NP-hard, even for the (easier) special case where all charging times are identical, all  rewards are 0-1 and there is only a single discharging station.
\end{theorem}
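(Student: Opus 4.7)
The plan is to reduce from the 3-Dimensional Matching problem (3DM), which is well known to be strongly NP-complete. An instance of 3DM specifies three disjoint sets $X$, $Y$, $Z$ of size $k$ each together with a collection $E$ of $M$ hyperedges in $X \times Y \times Z$, and asks whether $E$ contains a perfect matching $E' \subseteq E$ of size $k$ covering every element of $X \cup Y \cup Z$ exactly once. Given such an instance, I would build, in polynomial time, an instance of~\eqref{IP} with a single station ($n=1$), $M$ cars (one per hyperedge), $0/1$ rewards, and a common charging time $C$, such that the optimum equals $3k$ if and only if the 3DM instance admits a perfect matching.

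The natural construction associates to each element $e \in X \cup Y \cup Z$ a reward-$1$ time slot $t_e$, with every other slot having reward $0$, and gives the car $v_h$ for $h=(x,y,z) \in E$ the availability set $\{t_x, t_y, t_z\}$. The time horizon, the spacing of the slots $t_e$, and the value of $C$ would be tuned so that each car can in principle be discharged at all three of its availabilities, but so that any schedule collecting the full reward of $3k$ is combinatorially forced to correspond to an exact 3-dimensional matching. The forward direction is then routine: given a perfect matching $E'$, discharge every car $v_h$ with $h \in E'$ at each of its three slots; because $E'$ partitions the $3k$ elements, no two such discharges collide at the single station, the spacing satisfies the per-car charging constraint, and the total collected reward is $3k$.

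The main obstacle lies in the reverse direction, since without care a schedule of value $3k$ could correspond to a mere covering of $X \cup Y \cup Z$ by hyperedges, or could mix $X$-, $Y$- and $Z$-components from different hyperedges across time, neither of which is a 3DM. The bulk of the proof will therefore be devoted to designing the auxiliary structure---for example, arranging the slots $t_e$ in three ``phases'' (one each for $X$, $Y$, $Z$) separated by prescribed gaps, and choosing $C$ so that a car can be discharged at most once within a phase while still being usable once in each phase---that forces any optimal schedule to use exactly $k$ distinct cars, each discharged at all three of its availabilities, so that the set of used cars is itself a perfect 3-dimensional matching. Once this structural equivalence is verified, strong NP-hardness follows immediately from the strong NP-hardness of 3DM and the polynomial size of the reduction; the claimed inapproximability by a factor $1-\epsilon$ in time polynomial in the input and in $1/\epsilon$ is then a standard consequence of the integer optimum being at most $3k$, since any FPTAS with $\epsilon < 1/(3k)$ would decide 3DM in polynomial time, contradicting $\textbf{P} \neq \textbf{NP}$.
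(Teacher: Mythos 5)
Your skeleton (reduction from \textsc{3D-matching}, one car per hyperedge, one reward-$1$ slot per element, three time ``phases'' for $X$, $Y$, $Z$ with gaps and a charging time $C$ tuned so that a car can discharge at most once within a phase but once in every phase) matches the paper's, but the part you defer as ``the bulk of the proof'' hides a genuine gap, and the auxiliary structure you sketch is provably insufficient. With only the $3k$ element slots, a schedule of value $3k$ merely requires that, within each phase, the $k$ slots of that phase admit a system of distinct representatives among the available cars; nothing forces the \emph{same} $k$ cars to serve all three phases, nor forces a used car to be discharged at all three of its availabilities. Concretely, take $k=2$ and hyperedges $h_1=(x_1,y_1,z_1)$, $h_2=(x_2,y_2,z_1)$, $h_3=(x_2,y_1,z_2)$, $h_4=(x_1,y_2,z_2)$: every pair of hyperedges intersects, so there is no perfect matching, yet the assignment $t_{x_1}\mapsto h_1$, $t_{x_2}\mapsto h_2$, $t_{y_1}\mapsto h_3$, $t_{y_2}\mapsto h_4$, $t_{z_1}\mapsto h_1$, $t_{z_2}\mapsto h_3$ uses each car at most once per phase (and only in non-adjacent or consecutive phases, which your spacing explicitly permits) and collects all $3k=6$ rewards. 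Hence the equivalence ``optimum $=3k$ iff perfect matching'' fails for any construction of the kind you describe, no matter how the gaps and $C$ are chosen.

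The missing idea, which is the heart of the paper's reduction, is an \emph{absorbing gadget}: $|E|-k$ additional reward-$1$ slots placed between the $X$- and $Y$-phases and another $|E|-k$ between the $Y$- and $Z$-phases, at which \emph{every} car is available, with $C$ chosen so that a car discharged at such a dummy slot cannot also discharge in the adjacent element phases. (In the paper these are realized first as $|E|-k$ extra stations with rewards at two times $M$ and $3M$, and then folded into a single station by lengthening the horizon.) Collecting all rewards then forces $|E|-k$ cars to be consumed by the dummy slots, leaving exactly $k$ cars to cover the three phases; a counting argument shows each of these must discharge exactly once per phase at its own element, which is precisely a perfect matching. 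Without this gadget the reverse direction cannot be completed, so as written your proposal does not establish the theorem. Your closing remark on ruling out a $(1-\epsilon)$-approximation in time polynomial in $1/\epsilon$ is fine once the reduction is repaired.
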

\vspace{-.3em}
\begin{proof}
In order to simplify the proof, we first consider the case of an arbitrary number of discharging stations and then we show how this can be extended to the single station case. In order to accomplish this, we use a reduction from the well-known strongly NP-hard \textsc{3D-matching} problem \cite{garey1979computers}. In the \textsc{3D-matching} problem, we are given as input three sets of nodes $A,B,C$ such that $\abs{A}= \abs{B} = \abs{C}$ and a set $E$ of hyperedges of the form $e_i = (\alpha_i, \beta_i, \gamma_i)$, where nodes $\alpha_i \in A, \beta_i \in B, \gamma_i \in C$. The problem is to decide whether there exists a matching $S \subseteq E$ such that every node of $A\cup B\cup C$ is covered by some edge and no two edges of $S$ share a node. 

Given an instance $\mathcal{I} = (A,B,C,E)$ of the \textsc{3D-matching} problem, we begin by considering a time horizon $\T$ such that $\T = [4M+k]$, where $k=\abs{A}= \abs{B} = \abs{C}$ and $M$ is a large number such that $M \geq 2 k$. For example, an instance of the \textsc{3D-matching} problem with $k=2$ is shown in Fig.~\ref{fig:reduction}~(right) with its corresponding time horizon on the left (assuming $M=4$). We create one station $j_0$ which has reward $p_{j_0, t} = 1$ for times $t \in [1,k] \cup [2M+1, 2M+k] \cup [4M+1, 4M+k]$, and $p_{j_0, t} = 0$ for the rest of the times. Moreover, we create $\abs{E}-k$ additional identical stations, each having reward $p_{j,t} = 1$ for $t \in \{M, 3M\}$, and $p_{j,t} = 0$, elsewhere. For the example of Fig.~\ref{fig:reduction}, since there are $\abs{E}=3$ hyperedges, we have only one additional station $j_1$.

Regarding the vehicles, we create a vehicle $i$ for each hyperedge $e_i = (\alpha_i, \beta_i, \gamma_i)$ and set $\T_i = \{\alpha_i, 2M+\beta_i, 4M+\gamma_i\} \cup \{M,3M\}$, where we assume that $\alpha_i, \beta_i, \gamma_i \in [k]$ are the indices of the nodes in an arbitrary (fixed) ordering. For example, in Fig.~\ref{fig:reduction} the blue hyperedge has $e=(\alpha=2,\beta=2,\gamma=1)$, assuming that the ordering in $A,B,C$ is from top to bottom. Therefore the corresponding EV has availability $\T=\{2,10,17\}\cup\{4,12\}$. Finally, for all vehicles, we set the same charging time $C = M+k$. 

By the above construction, given a polynomial-time exact algorithm for our problem, one could decide on the feasibility of \textsc{3D-matching}. More specifically, our algorithm is able to collect all the non-zero rewards if and only if the answer of the \textsc{3D-matching} instance is \textsc{yes}. Indeed, the vehicles that correspond to the $k$ hyperedges of the matching can collect every reward of the $j_0$ station, while the remaining $|E|-k$ vehicles can collect all the rewards in the $j \neq j_0$ stations. For the other direction, it can be shown that in the case where the answer of the $\textsc{3d-matching}$ problem is \textsc{no}, there is no feasible assignment of vehicles that can collect all the rewards.

Note that a similar construction can be made by using a single discharging station. Specifically, instead of adding $|E| - k$ stations with only two times of non-zero reward (recall $t \in \{M, 3M\}$), one could extend the time horizon of the single station $j_0$ and fit $|E| - k$ non-zero reward times between the times that correspond to the sets $A$ and $B$, and another group of  $|E| - k$ non-zero rewards between the times that correspond to $B$ and $C$. Then, for a proper choice of $M$, and thus a large enough charging time, the resulting five groups of times (i.e. $|A|, |E|-k, |B|, |E|-k, |C|$) can be made pairwise incompatible for every single vehicle (hyperedge).
\end{proof} 

\section{Special tractable cases}
\label{sec:special}
Given that the problem at hand is strongly NP-hard, we resort to approximation algorithms. Before that, to develop a better understanding and insight into the problem, we analyze several special cases that can be solved optimally in polynomial time.

\subsection{Zero charging time}
When we have access to super-fast chargers, which can charge the battery in a time much smaller than our time unit, we can assume that $C_i=0$. This implies that any vehicle can be discharged during any time of its availability, and we show that our problem can be reduced to the \textsc{maximum bipartite matching} problem \cite{Kleinberg:2005:AD:1051910}, which can be solved in polynomial time. To see this, consider a set of vertices $L$ such that $(i,t) \in L$ for any vehicle $i \in \V$, $t \in \T_i$, and a set of vertices $R$ such that $(j,t) \in R$ for all stations $j \in \J$ and times $t \in T$. We also define the set of edges $E = \{\{(i,t), (j,t')\}, \forall (i,t) \in L, \forall (j,t)\in R, t'=t\}$ and associate every edge that is adjacent to some node $(j,t) \in R$ with {\em weight} equal to the reward $p_{j,t}$. Given this construction, 
the optimal schedule corresponds to a maximum bipartite matching in the aforementioned graph, which can be solved optimally in polynomial time~\cite{Kleinberg:2005:AD:1051910}.

\subsection{Single vehicle}
The problem accepts a polynomial time LP-based algorithm for the case where there is a single vehicle. In this case, we can assume without loss of generality that there is only a single discharging station, otherwise we consider the station of highest reward at every time. We should note that, although the case of a single vehicle is contained in the case of constant number of vehicles (studied next), the following result provides useful intuition on the geometry and polyhedral aspects of the problem.
Let $x_t \in \{0,1\}$ be the decision variable denoting the discharging of the single car at time $t$. Consider the following LP relaxation of our problem:
\begin{align}\label{LP}
\text{max }~ &\sum_{t \in \T} p_{t} x_{t}\\
\text{s.t. }~
  &\sum_{t' \in [t, t+C_1]} x_{t'} \leq 1 ,~~~~~~~~~~~~~~~~\forall t \in \T_1 \label{eq:LP_c}\\
  &x_{t} \equiv 0,~~~~~~~~~~~~~~~~~~~~~~~~~~~~~\forall t \notin \T_1\\
  &x_{t} \geq 0,~~~~~~~~~~~~~~~~~~~~~~~~~~~~~\forall t \in \T,
\end{align}
We can show the following integrality result:
\begin{theorem}
\label{thm:integral}
The solution returned by the above LP relaxation~(\ref{LP}) is integral.
\end{theorem}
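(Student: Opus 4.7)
The plan is to show that LP~(\ref{LP}) has a totally unimodular constraint matrix, after which the standard polyhedral fact that a system $Ax \leq b$, $x \geq 0$ with $A$ totally unimodular and $b$ integral defines an integer polyhedron yields the claim immediately.

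First, I would drop every variable $x_t$ with $t \notin \T_1$, since these are hard-wired to zero and contribute nothing. The remaining free variables and the constraints~\eqref{eq:LP_c} can both be indexed by the elements of $\T_1$ listed in increasing order, say $\T_1 = \{s_1 < s_2 < \cdots < s_k\}$. The row corresponding to $s_i$ then has a $1$ in column $s_j$ precisely when $s_i \leq s_j \leq s_i + C_1$, which in the induced ordering means $j \in \{i, i+1, \dots, j^*(i)\}$ for some $j^*(i) \geq i$ (note $j=i$ always qualifies). Hence every row of the coefficient matrix is a block of consecutive $1$s followed by $0$s: the matrix has the consecutive-ones property in rows, i.e., it is an interval matrix.

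The next step is to invoke the classical theorem (verifiable, e.g., via Ghouila-Houri's characterization, or as a corollary of the network-matrix view) that interval matrices are totally unimodular. Since the right-hand side of every constraint in~\eqref{LP} is $1 \in \mathbb{Z}$ and the bound constraints $x_t \geq 0$ and the fixings $x_t = 0$ for $t \notin \T_1$ are integral, the feasible region is an integer polyhedron. Any vertex-optimal solution returned by the LP solver is therefore integer-valued, proving the theorem.

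The only point requiring care is checking that the consecutive-ones property truly survives the projection onto $\T_1$, but this is immediate from the monotonicity of the ordering map $s_i \mapsto i$: intersecting an integer interval with $\T_1$ and then re-indexing yields a contiguous block of positions. As a self-contained alternative that bypasses total unimodularity, one could instead exhibit a one-pass dynamic program over $\T_1$ that computes an integral feasible solution matching the LP optimum, thereby certifying tightness directly; I would however favor the TU-based argument since it elucidates the polyhedral structure that generalizes to the LP-rounding approach used later in the paper.
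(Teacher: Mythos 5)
Your argument is correct. Note that the paper itself states Theorem~\ref{thm:integral} without any proof, so there is no authorial argument to compare against; your write-up in fact supplies the missing justification. The route you take --- eliminate the variables fixed to zero, observe that each constraint~\eqref{eq:LP_c} selects a contiguous block of the remaining columns under the induced ordering of $\T_1$ (a point you rightly flag and correctly resolve, since an order-preserving bijection maps the intersection of an integer interval with $\T_1$ to a contiguous index set), and then invoke total unimodularity of interval matrices together with integrality of the right-hand side --- is the standard and, as far as one can tell, intended polyhedral argument for this kind of single-machine interval-packing LP. The one small caveat worth making explicit is that integrality of the polyhedron guarantees only that some optimal \emph{vertex} solution is integral, which you do state; the theorem as phrased (``the solution returned'') implicitly assumes the solver returns a basic optimal solution.
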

Since the LP relaxation provides an upper bound, the above integrality result implies that the solution returned by the LP is the optimal solution to the original problem.

\vspace{-0.5em}
\subsection{Constant number of vehicles}

The third class of instances that can be solved in polynomial time corresponds to the case where the number  of vehicles ($m$) is small enough and can be considered constant. For this case, we can solve the problem using dynamic programming. More specifically, we construct a matrix $OPT\in \mathbb{N}^{m+1}$ with entries $OPT(t,r_1, r_2, \dots, r_m)$, where every variable $r_i$ denotes the remaining time for a vehicle $i \in \V$ until it is able to be discharged again. Notice that the total size of this matrix is $\bigO(T C_{\max}^m)$, where $C_{\max} = \max_{i \in \V} C_i = \Theta(T)$. In general, for the computation of every element $OPT(t,r_1, r_2, \dots, r_m)$ of the matrix, we want to find which combination of available vehicles we can discharge such that the reward collected plus the future optimal reward given this discharging is maximized. Notice that trying all possible combinations of vehicles to discharge is polynomial, given the assumption of constant $m$. By the above analysis, it suffices to recursively compute all the entries of the $OPT(t,r_1, r_2, \dots, r_m)$ matrix, starting from the end of the time where the solution is trivial. After the computation of the whole matrix, the value $OPT(t=1, r_1 = 0, r_2 = 0, \dots, r_m = 0)$ would be the answer to our problem. Notice that we can easily extend the above algorithm to keep track of our best choices at each time and return, in addition to the optimal reward, the optimal schedule. In the following, we give an example of a recursive computation for the case of a single station:
\begin{multline*}
OPT(t, r_1, r_2, \dots, r_m) =\\
\max_{i\in \V\cup \{\emptyset\}}\Big\{OPT(t+1,C_i,r_{\overline{i}}-1)+p_{j,t}.\mathbf{1}{[t\in \T_i]}\Big\},
\end{multline*}
where $OPT(t+1,C_i,r_{\overline{i}}-1)$ is the short form for $OPT(t+1, r_1, r_2, \dots, r_m)$ where we replace $r_i$ with $C_i$ and all other indices (denoted by $\overline{i}$) are decremented by one (if they are positive).

\subsection{Homogeneous vehicles}

A similar dynamic programming approach works for the case where all vehicles have the same availability period and the charging time, say $C$, is constant and identical. In that case, we only care about the number of available vehicles and not their identity. We construct a matrix $OPT\in \mathbb{N}^{C+2}$, with elements $OPT(t, r_0, r_1, r_2, \dots, r_C)$, where $t \in \T$ is the time and $r_\ell$ is the number of vehicles that will be available in $\ell$ time periods. Clearly, $\ell$ is upper bounded by $C$, the charging time. Given this, the size of the matrix is $\bigO(T m^{C+1})$, and the elements can again be computed in a recursive manner as follows: 
\begin{multline*}
OPT(t, r_0, r_1, \dots, r_C) =\\
\max_{0\leq k\leq r_0} \Big\{OPT(t+1,r_0+r_1-k,r_2,\dots,r_C,k)\\
+\max_{\abs{S}=k} \sum_{j\in S} p_{j,t}.\mathbf{1}{[t\in \T_1]}\Big\} ,
\end{multline*}
where $k$ is the number of vehicles that we schedule at time $t$, upper bounded by the number of available cars $r_0$. For the optimal $k$, the set $S$ corresponds to the top $k$ stations with the highest rewards, which will be assigned to those $k$ vehicles. In the next time step $t+1$, these cars will appear as the last argument, since they now need $C$ time steps to get charged. In addition, all other arguments will be shifted one to the left, as the cars which needed $\ell$ time periods (for charging) at time $t$ will need another $\ell-1$ time steps at time $t+1$. 

\section{Approximation algorithms}
\label{sec:approx}
In this section, we present approximation algorithms for the general valet problem~\eqref{IP}.

\subsection{A $\frac{1}{3}$-approximation greedy algorithm}
We first present a greedy algorithm. Even though this algorithm has a weaker approximation guarantee than the algorithm in the following section, it has the advantage that it applies to a more general setting where the rewards can depend not only on the time and location, but also on the vehicle. That is, for any choice of vehicle $i \in \V$, station $j \in \J$ and time $t \in \T_i$ the collected reward can be vehicle specific, denoted by $p_{i,j,t}$. We define $\F = \{(i,j,t), i \in \V, j \in J, t \in \T_i\}$ to be the set of all feasible assignments, that is, all the possible triplets of vehicles to station/time pairs that can be realized individually, assuming an empty schedule. Consider the following algorithm: at each iteration, we greedily pick the highest available reward, assign it to an available car $i$ (or the car that gives that reward in the case of vehicle-dependent rewards), and remove all the triplets that are no longer feasible. This includes any other car that wants to discharge at the same location and time, as well as the triplets that wish to discharge the same car in the next $C_i$ time intervals. The formal algorithm is tabulated under Algorithm \ref{alg:greedy}. We can prove the following performance guarantee for the algorithm: 

\begin{algorithm}[t]
\caption{\textsc{greedy}}
\begin{algorithmic}[1]
\State Initialize: $\F = \{(i,j,t) | i \in \V, j \in \J, t \in \T_i\}$.
\While{$\F \neq \emptyset$}
\State Choose the feasible assignment of maximum reward:  \hspace*{1.4em}$(i,j,t) \leftarrow \text{argmax}\{p_{j,t} | (i,j,t) \in \F \}$.
\State Assign vehicle $i$ to the station/time $(j,t)$ and collect \hspace*{1.4em}the reward $p_{j,t}$.
\State Remove from the feasible set $\F$ all the assignments \hspace*{1.4em}that are no longer feasible. 
\EndWhile
\end{algorithmic}
\label{alg:greedy}
\end{algorithm}
\begin{theorem}
The \textsc{greedy} algorithm produces in polynomial time a schedule of total reward $P \geq \frac{1}{3} OPT$, where $OPT$ is the reward of an optimal schedule.
\end{theorem}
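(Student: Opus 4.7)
The plan is to use a charging argument, which is the standard technique for analyzing greedy maximum-weight algorithms of this kind. Let $G$ denote the set of triples selected by \textsc{greedy} and $O$ an optimal schedule (we may assume \textsc{greedy} only picks assignments of positive reward, so negative-reward triples are harmless to both sides). I will build a map $\phi : O \to G$ satisfying (i)~$p(\phi(o)) \ge p(o)$ for every $o \in O$, and (ii)~$|\phi^{-1}(g)| \le 3$ for every $g \in G$. These two properties immediately yield
\[
\sum_{o \in O} p(o) \;\le\; \sum_{o \in O} p(\phi(o)) \;=\; \sum_{g \in G} |\phi^{-1}(g)|\,p(g) \;\le\; 3\sum_{g \in G} p(g),
\]
which is exactly $\text{OPT} \le 3P$.

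To construct $\phi$, for each $o \in O$ I let $\phi(o)$ be the triple selected by \textsc{greedy} at the first iteration in which $o$ is removed from $\F$ (so $\phi(o)=o$ when $o \in G$). Such an iteration exists because the algorithm runs until $\F = \emptyset$ and every $o \in O$ starts in $\F$. Property~(i) is then immediate from the greedy choice rule: just before $\phi(o)=g$ is selected, $o$ is still feasible, and \textsc{greedy} picks a maximum-reward element of $\F$, so $p(g) \ge p(o)$.

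The substantive step is the multiplicity bound~(ii). Fix $g = (i', j', t') \in G$; the triples removed from $\F$ at this iteration are precisely those conflicting with $g$, and they split into two types. Type~A is $\{(i, j', t') : i \in \V\}$, the station-time conflicts ruled out by constraint~\eqref{eq:IP_c1}. Type~B is $\{(i', j, t) : j \in \J,\, t \in [t' - C_{i'},\, t' + C_{i'}]\}$, the vehicle-recharge conflicts ruled out by constraint~\eqref{eq:IP_c2}. Since $O$ is itself feasible, at most one element of $O$ can be of type~A (only one vehicle uses $(j',t')$ in $O$). For type~B, consecutive discharges of $i'$ in $O$ are at least $C_{i'}+1$ time steps apart, so three such discharges inside a window of $2C_{i'}+1$ time steps would force a pair at distance at least $2(C_{i'}+1) > 2C_{i'}$, exceeding the window; hence at most two elements of $O$ are of type~B. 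The two types intersect only in the single triple $(i',j',t')=g$, so whether or not $g \in O$ we get $|\phi^{-1}(g)| \le 3$.

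Polynomial runtime is immediate since $|\F| = O(mn|\T|)$ initially and each iteration removes at least one element. The main obstacle I anticipate is the clean case analysis for~(ii), in particular the careful use of the recharge constraint to bound the type-B count by $2$ and the bookkeeping for the possible overlap between type~A and type~B, which is what keeps the ratio at $1/3$ rather than something worse.
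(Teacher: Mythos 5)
Your proof is correct and follows essentially the same route as the paper's: the greedy choice guarantees each selected triple has reward at least that of any optimal triple it eliminates, and each selection conflicts with at most one optimal assignment at the same station/time plus at most two optimal assignments of the same vehicle within the recharge window $[t-C_i,\,t+C_i]$, giving the factor $3$. Your explicit charging map $\phi$ and the verification that the two conflict types overlap only at $g$ itself is just a slightly more formal packaging of the paper's decomposition of $P$ over $O\cap A$ and $A\setminus O$.
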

\begin{proof}
The polynomial running time of the algorithm can be trivially justified. The algorithm sorts $\bigO( m n T)$ initial assignments and then constantly updates the ordered list in linear time, by removing any infeasible assignments. The total number of iterations is at most $\bigO(\min{(n T, m T)})$. On the approximation ratio of the algorithm, it suffices to make the following crucial observation: Let $A$ and $O$ be the set of assignments in the \textsc{greedy} and the optimal solution, respectively. For any assignment $(i,j,t) \in A$, we distinguish between two cases: whether (a) it is in the optimal solution $(i,j,t)~\in~O$ or (b) it is not, i.e., $(i,j,t) \notin O$. Notice that in the second case, the assignment $(i,j,t)$ excludes at most three assignments that are made in the optimal solution: (b1) $\exists i' \neq i$ s.t. $(i',j,t)\in O$, that is, another vehicle is assigned to the same station/time pair in the optimal solution, (b2) there exist $t' \neq t$ and $j'$ such that $(i, j', t') \in O$ and $t' \in [t-C_i , t+C_i]$, i,e, the optimal solution contains an assignment of the same vehicle $i$ that is incompatible with $(i, j, t)$ given the charging time restriction. It is not hard to verify that for any vehicle $i$, there can be at most two assignments in any interval $[t-C_i , t+C_i]$. Therefore, given that \textsc{greedy} always chooses the assignment of highest reward, the reward of every $(i,j,t) \in A$, $p_{j,t}$, is greater than $\frac{1}{3}$ times the sum of rewards of the assignments of the optimal solution it excludes. Therefore, we have: 
\begin{align*}
P &= \sum_{(i,j,t) \in A} p_{j,t} \\
&= \sum_{(i,j,t) \in O \cap A} p_{j,t} +  \sum_{(i,j,t) \in A \backslash O} p_{j,t} \\ 
&\geq \sum_{(i,j,t) \in O \cap A} p_{j,t} + \frac{1}{3} \sum_{(i,j,t) \in O \backslash A} p_{j,t} \\
&\geq \frac{1}{3} \bigg[\sum_{(i,j,t) \in O \cap A} p_{j,t} + \sum_{(i,j,t) \in O \backslash A} p_{j,t} \bigg]\\
&= \frac{1}{3} \sum_{(i,j,t) \in O} p_{j,t} = \frac{1}{3}OPT.
\end{align*}
\vspace{-1.5em}
\end{proof}

\subsection{A $(1-\frac{1}{e})$-approximation randomized algorithm}
In this section, we present an LP-based randomized algorithm with a performance guarantee of $1 - \frac{1}{e}\approx 0.63$, which is better than the previous greedy algorithm. We consider the linear programming relaxation of IP~(\ref{IP}), which we create by allowing the decision variables to take any non-negative value, i.e. $x_{i,j,t} \geq 0$. This LP formulation gives an upper bound to the optimal solution of our problem. Since the LP solution in not necessarily integral (or even half-integral), we adapt a randomized rounding algorithm, proposed in \cite{bhatia2007algorithmic} for an interval scheduling problem with application to bandwidth trading. Although our problem is not a special case of the bandwidth trading problem \cite{bhatia2007algorithmic}, they are both special cases of a more general problem, which can be solved by the same randomized rounding algorithm. The formal description of the algorithm in the context of our problem is given under Algorithm~\ref{alg:RR}. 
\begin{algorithm*}[t]
\caption{\textsc{Randomized Rounding}}
\label{alg:RR}
\begin{algorithmic}[1]
\State Solve the LP relaxation and let $\{x_{i,j,t}\}$ be the (fractional) solution.
\For{every vehicle $i \in \V$}
\State Let $A_i = \{(j,t)| j \in \J, t \in \T_i, x_{i,j,t}>0\}$
\State Consider an empty 2-dimensional area: $[1,T+1] \times [0,1] \subseteq \mathbb{R}^2$ defined by the x-axis and the y-axis.
\For{$(j,t) \in A_i$ in non-decreasing order of $t \in \T$}
\State  Create a rectangular area of y-coordinates within $[0,x_{i,j,t}]$ and of x-coordinates within $[t, t+C_i+1)$, if the \hspace*{3em}corresponding area is empty. 
\State If this area is not empty, split the rectangle into smaller stripes of fixed width $[t, t+C_i+1)$, and fit these slices \hspace*{3em}anywhere in the y-axis.
(Notice that the rectangles can be fragmented only in the y-axis, but no fragmentation is \hspace*{3em}allowed in the x-axis that corresponds to the time.) 
\EndFor
\State Sample a horizontal line from $y=0$ to $y=1$ uniformly at random.
\State Assign vehicle $i \in \V$ to all the stations and times which are crossed by the line.
\EndFor
\State If more than one vehicle has been assigned in the same $(j,t), j \in \J, t \in \T$, arbitrarily keep one.
\end{algorithmic}
\end{algorithm*}

Since the LP solution can be fractional, a single car could be assigned fractionally to conflicting discharging stations. But constraint \eqref{eq:IP_c2} guarantees that at each time, the total assignment is at most one, allowing us to fit the assignments in a strip of height one as shown in  Fig.~\ref{fig:RR}. The rounding phase consists of sampling a random horizontal line in this figure, and picking the assignments that it intersects.
\begin{figure}[t]
\centering
\includegraphics[scale=0.7]{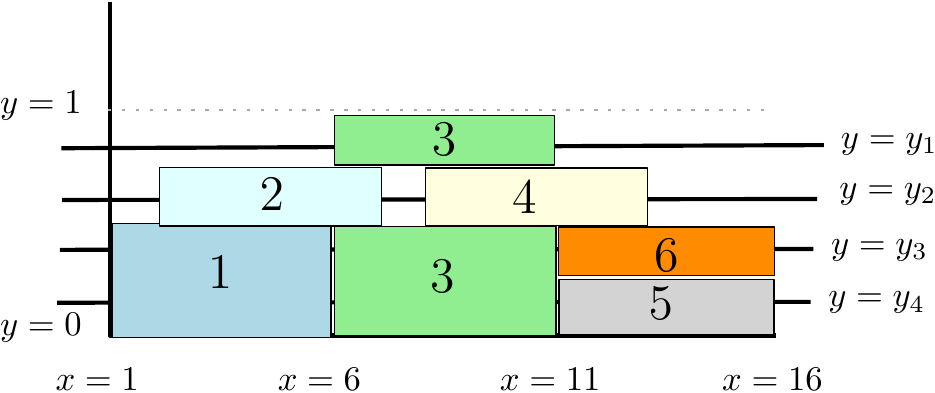}
\caption{Example of \textsc{Randomized Rounding} for a fixed vehicle $i$ of $C_i = 4$. We have the following non-zero assignments: $x_{i,1,1} = 0.50$, $x_{i,2,2} = 0.25$, $x_{i,3,6} = 0.75$, $x_{i,4,8} = 0.25$, $x_{i,5,11} = 0.25$ and $x_{i,6,11} = 0.25$. 
Notice that the rectangle of station 3 has been fragmented into two slices of height 0.25 and 0.50. 
For the random horizontal lines $y = y_1$ to $y = y_4$, the corresponding feasible assignments are $\{3\}$, $\{2,4\}$, $\{1,3,6\}$ and $\{1,3,5\}$, respectively.}
\label{fig:RR}
\end{figure}



The following theorem provides a performance guarantee for the randomized rounding algorithm. We refer the reader to \cite{bhatia2007algorithmic} for a complete proof.
\begin{theorem}[\cite{bhatia2007algorithmic}]
The \textsc{Randomized Rounding} algorithm creates a feasible schedule of expected reward $\mathbb{E}[P] \geq (1-\frac{1}{e}) OPT$, where $OPT$ is the reward of an optimal assignment.
\end{theorem}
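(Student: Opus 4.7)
The overall plan is to (i) observe that the LP relaxation is an upper bound on $OPT$, (ii) show that the random line sampled for each vehicle produces, independently across vehicles, a set of tentative assignments in which each $(i,j,t)$ appears with marginal probability exactly $x_{i,j,t}$, and (iii) combine these independent per-vehicle draws via the standard ``independent rounding with column sum at most one'' inequality, losing only a factor of $1-1/e$.

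First I would verify that the packing step of Algorithm~\ref{alg:RR} always succeeds. Fix a vehicle $i$: the rectangle for $(j,t)\in A_i$ has horizontal extent $[t,t+C_i+1)$ and total vertical mass $x_{i,j,t}$, so any vertical slice at abscissa $s$ stacks mass $\sum_{j\in\J}\sum_{t'\in[s-C_i,\,s]} x_{i,j,t'}$, which is at most $1$ by constraint~\eqref{eq:IP_c2}. A left-to-right sweep that permits vertical (but not horizontal) splits therefore always fits all rectangles into the unit-height strip $[1,T+1]\times[0,1]$. The main obstacle is really this packing feasibility together with the bookkeeping for fragmented rectangles; once it is in place, the probabilistic analysis is short.

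Next I would establish per-vehicle feasibility and marginal selection probabilities. Any two rectangles crossed by the same horizontal line have overlapping $y$-ranges, so in a valid packing they must have disjoint $x$-ranges, forcing $|t-t'|\geq C_i+1$; hence the tentative assignments selected for vehicle $i$ deterministically respect constraint~\eqref{eq:IP_c2}. Since the total $y$-mass of the fragments of rectangle $(j,t)$ is preserved at $x_{i,j,t}$, and the sampled $y$ is uniform on $[0,1]$, the event ``vehicle $i$ is tentatively assigned to $(j,t)$'' has probability exactly $x_{i,j,t}$. Independence across vehicles is built into the algorithm, and the final tiebreaking step enforces constraint~\eqref{eq:IP_c1}.

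Finally I would put everything together. Noting that the LP will set $x_{i,j,t}=0$ whenever $p_{j,t}<0$, I may assume all retained rewards are nonnegative. The expected collected reward is then
\begin{align*}
\mathbb{E}[P] &= \sum_{j\in\J}\sum_{t\in\T} p_{j,t}\left(1-\prod_{i\in\V}(1-x_{i,j,t})\right) \\
&\geq \sum_{j,t} p_{j,t}\cdot\left(1-\tfrac{1}{e}\right)\sum_{i\in\V} x_{i,j,t} \\
&= \left(1-\tfrac{1}{e}\right)\sum_{i,j,t} p_{j,t}\, x_{i,j,t} \;\geq\; \left(1-\tfrac{1}{e}\right)OPT,
\end{align*}
where the middle step combines $1-\prod_i(1-y_i)\geq 1-e^{-\sum_i y_i}$ (AM--GM) with $1-e^{-s}\geq(1-1/e)\,s$ for $s\in[0,1]$ (concavity), applicable since constraint~\eqref{eq:IP_c1} gives $\sum_i x_{i,j,t}\leq 1$.
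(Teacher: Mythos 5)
Your proof is correct. Note that the paper does not actually prove this theorem: it imports the statement from the cited bandwidth-trading reference and explicitly defers the proof there, so there is no in-paper argument to compare against. Your write-up supplies the standard analysis that the citation is invoking, and it correctly handles the points that genuinely need checking when transferring it to this model: the column mass $\sum_{j}\sum_{t'\in[s-C_i,\,s]}x_{i,j,t'}$ at any abscissa $s$ is at most $1$ (this follows from constraint~\eqref{eq:IP_c2} applied at the earliest time in that window carrying positive mass, since $x_{i,j,t}\equiv 0$ off $\T_i$), so the unit-height strip packing with vertical-only fragmentation always succeeds and each $(j,t)$ is selected for vehicle $i$ with marginal probability exactly $x_{i,j,t}$; disjointness of the $x$-ranges hit by a single horizontal line enforces the recharging constraint per vehicle; independence across vehicles plus $\sum_i x_{i,j,t}\le 1$ from constraint~\eqref{eq:IP_c1} yields the coverage bound $1-\prod_i(1-x_{i,j,t})\ge\left(1-\tfrac{1}{e}\right)\sum_i x_{i,j,t}$; and the reduction to nonnegative rewards via an optimal LP solution with $x_{i,j,t}=0$ whenever $p_{j,t}<0$ is a legitimate without-loss-of-generality step. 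The only cosmetic quibble is the attribution of $1-\prod_i(1-y_i)\ge 1-e^{-\sum_i y_i}$ to AM--GM; that route works (bounding the product by $(1-s/n)^n\le e^{-s}$), but the one-line justification $1-y\le e^{-y}$ is cleaner.
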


\section{Simulation results}
\label{sec:numerical}
In this section, we evaluate the empirical efficiency of our proposed algorithms. We compare the collected reward with the optimal reward, for the instances where we can calculate the latter, and with an upper bound on the optimal reward, for larger instances where solving the integer program cannot be done within a reasonable amount of time. In the absence of actual data on our problem and in order to prove the robustness of our algorithms, we choose test cases of variable charging times, variable availability intervals and rewards that are generated by random initial starting values and then randomly change at each time step within a defined range.
\begin{figure*}
\centering
\includegraphics[width=1\linewidth]{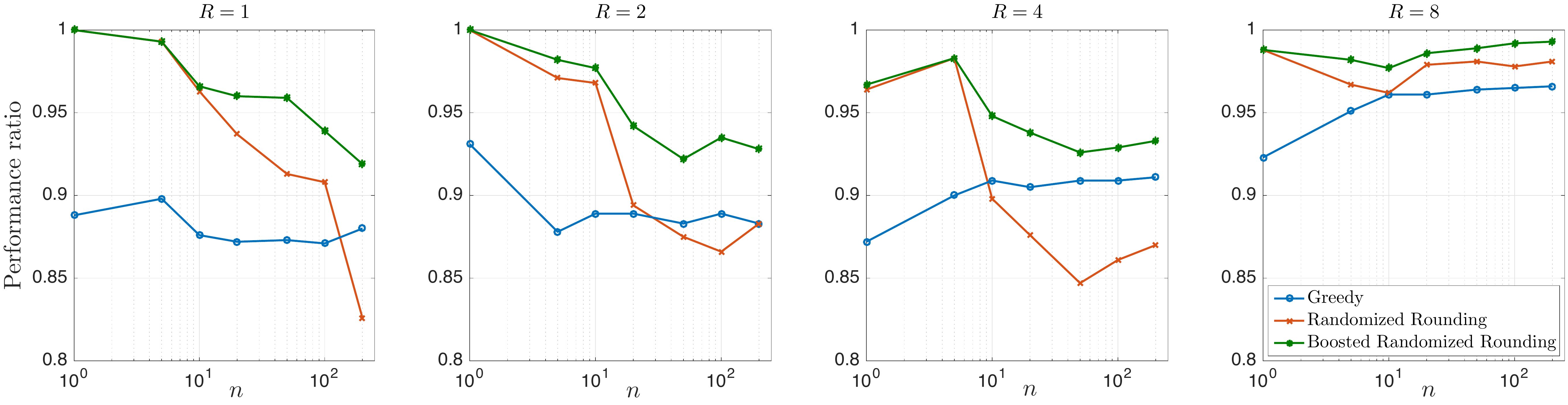}
\vspace{-1.5em}
\caption{Performance ratio of the proposed algorithms for different values of $R=m/n$.}
\label{fig:results}
\end{figure*}
\subsection{Simulation setting}
We compare the performance of our algorithms \textsc{greedy} and \textsc{randomized rounding} with the optimal reward \textsc{opt} (which we obtain by solving the IP) or with an upper bound on \textsc{opt} (which we obtain by solving the LP relaxation). We also consider the {\em boosted} version of the randomized algorithm, called \textsc{boosted randomized rounding}, in which we repeat the randomized algorithm 10 times and return, among the solutions we found, the one with the highest reward, amplifying in this way the probability of choosing a profitable schedule.

In terms of test cases, we consider a time horizon of $T = 24$ hours and various combinations of the number of vehicles and stations. The charging time of each vehicle is a number between 1 and 6 hours (to capture different battery sizes, charging rates, etc.) chosen uniformly at random (u.a.r.). On the availability of each vehicle, we distinguish between two types of customers of equal probability: those with a single continuous interval of length 1 to 24 chosen u.a.r., and those with three (smaller) intervals of length 1 to 8 chosen u.a.r..  We also define the starting time of each interval to be a number between 1 and 24 chosen u.a.r.. On the choice of rewards, we initially choose for each station $j \in \J$ a value $p_{j,1}$ between 0 and 100 u.a.r. to denote the average reward, differentiating in this way the stations in terms of their location. Then, we model a smooth change in the reward on a specific station as follows: for $t>1$ we draw $p_{j,t}$ from $[\ell_{j,t},u_{j,t}]$ u.a.r. with $\ell_{j,t} = \max\{0.7 \cdot p_{j,t-1},p_{j,1}-25,0\}$ and $u_{j,t} = \min\{1.3 \cdot p_{j,t-1},p_{j,1}+25,100\}$. That is, the reward of any (hour, station) pair is always within $70\%$ to $130\%$ of the reward of the previous hour, and always within a band of $\pm 25$ from the initial reward.

Given our observation that the number of vehicles is without loss of generality greater that the number of stations, we simulate for different values of the following two parameters: (a) The number $n$ of discharging stations and (b) the {\em ratio} of the number of vehicles $m$, over the number of stations $n$, denoted by $R = \frac{m}{n}$. The values we consider are $n \in \{1, 5, 10, 20, 50, 100, 200\}$ and $R \in \{1, 2, 4, 8\}$. Finally, for every choice of $n$ and $R$, we perform $10$ independent experiments and return, for each algorithm, the average of the empirical ratios found. 

We solve the integer and linear programs using Gurobi Optimizer 8.0, while we use Python 2.7 for scripting.

\subsection{Presentation and analysis of the results}
The empirical approximation ratios resulting from our simulations are presented in Table~\ref{table:results} and Fig.~\ref{fig:results}. We denote by star (*) the cases where the approximation ratio is computed with respect to the optimal reward.
\begin{table}[h]
\centering
\caption{Empirical approximation ratios.}
\vspace{-0.5em}
\label{table:results}
\scalebox{0.8}{
\begin{tabular}{| c | c | c | c | c | c | c | c |}
\hline
\textbf{R=1} & 1* & 5* & 10* & 20* & 50* & 100* & 200* \\ \hline
\textsc{G}& 0.888& 0.898& 0.876& 0.872& 0.873& 0.871& 0.880\\ \hline
\textsc{RR}& 1.000& 0.993& 0.963& 0.937& 0.913& 0.908& 0.826\\ \hline
\textsc{BRR}& 1.000& 0.993& 0.966& 0.960& 0.959& 0.939& 0.919\\ \hline
\end{tabular}}
\scalebox{0.8}{
\begin{tabular}{| c | c | c | c | c | c | c | c |}
\hline
\textbf{R=2} & 1* & 5* & 10* & 20* & 50* & 100* & 200* \\ \hline
\textsc{G}& 0.931& 0.878& 0.889& 0.889& 0.883& 0.889& 0.883\\ \hline
\textsc{RR}& 1.000& 0.971& 0.968& 0.894& 0.875& 0.866& 0.883\\ \hline
\textsc{BRR}& 1.000& 0.982& 0.977& 0.942& 0.922& 0.935& 0.928\\ \hline
\end{tabular}}
\scalebox{0.8}{
\begin{tabular}{| c | c | c | c | c | c | c | c |}
\hline
\textbf{R=4} & 1* & 5* & 10* & 20* & 50* & 100* & 200* \\ \hline
\textsc{G}& 0.872& 0.900& 0.909& 0.905& 0.909& 0.909& 0.911\\ \hline
\textsc{RR}& 0.964& 0.983& 0.898& 0.876& 0.847& 0.861& 0.870\\ \hline
\textsc{BRR}& 0.967& 0.983& 0.948& 0.938& 0.926& 0.929& 0.933\\ \hline
\end{tabular}}
\scalebox{0.8}{
\begin{tabular}{| c | c | c | c | c | c | c | c |}
\hline
\textbf{R=8} & 1* & 5* & 10* & 20* & 50* & 100 & 200 \\ \hline
\textsc{G}& 0.923& 0.951& 0.961& 0.961& 0.964 & 0.965 & 0.966\\ \hline
\textsc{RR}& 0.988& 0.967& 0.962& 0.979& 0.981 & 0.978 & 0.981\\ \hline
\textsc{BRR}& 0.988& 0.982& 0.977& 0.986& 0.989 & 0.992 & 0.993\\
\hline
\end{tabular}}
\end{table}

In Fig.~\ref{fig:results} we have plotted the performance ratio of the proposed algorithms separately for each value of $R$, the ratio between the number of cars and stations, as a function of $n$. 
Nevertheless, there is no clear relation between the number of stations ($n$) on the $x$-axis and the performance ratio on the $y$-axis (remember that our worst-case performance ratios are constant). Hence, these plots should be merely seen as a comparison between the proposed algorithms in different scenarios.
Based on our simulation results, we can make the following observations: 
\begin{itemize}
\item The actual performance of our algorithms in every case is significantly better than the theoretically proven worst-case guarantees, i.e. $82\% - 100\%$ of the optimal. Moreover, the overall performance of all algorithms increases with the ratio $R$. 
\item The \textsc{greedy} algorithm  (which has the worst approximation ratio of $\frac{1}{3}$), has the worst empirical performance among our algorithms, specifically $87\%-96\%$ of the optimal. 

\item \textsc{boosted randomized rounding} achieves the best performance, producing empirical ratios of at least $92\%$ of the optimal. Moreover, it produces, as we expected, significantly better results than the simple \textsc{randomized rounding}.

\item For the case of $R=1$ and $n=1$, we observe that the \textsc{randomized rounding} always produces the optimal result, a fact that is justified by the integrality of the LP relaxation in this case (see Theorem~\ref{thm:integral}). 
\end{itemize}


\section{Conclusion}
\label{sec:conclusion}
We have introduced the EV Valet concept, a new method for utilizing EVs as dynamic mobile energy storage to provide grid load balancing. 
The method identifies and builds on a symbiosis between the goals of car owners to get free parking in high-cost inner city areas, and the desire of utility companies to alleviate congestion overloads and to delay otherwise required expensive upgrades.  This symbiosis allows for a third party valet company to act as an intermediary between the two enabling this mutually beneficial relationship.

In addition to introducing this novel idea, we develop the necessary theory for the effective operation of this proposed valet system.  Specifically, we show that the valet problem is strongly NP-hard, yet we are able to identify cases that can be solved optimally in polynomial time.  Moreover, we solve the general problem using approximation algorithms, which empirically perform significantly better than their worst-case theoretical guarantees that we provide. 

There are numerous open computational directions and model extensions, such as: (i) whether better approximation algorithms can be developed; (ii) how the optimal scheduling can be done if the EV arrivals and departures are unknown in advance; (iii) how to hire the optimal number of valet employees for an extended time horizon such as a season.  

Lastly, an important complementary extension of our work is to design an optimal mechanism for the rewards the utility company should offer so as to align the incentives of the valet company with its own goals.


%





\ifCLASSOPTIONcaptionsoff
  \newpage
\fi



%
\bibliographystyle{IEEEtran}
\bibliography{ref.bib}







\end{document}